\numberwithin{equation}{section}
\newtheoremstyle{theor}{6pt plus 1pt minus 1pt}{6pt plus 1pt minus 1pt}{\slshape}{}{\bfseries}{.}{5pt plus 1pt minus 1pt}{}
\newtheoremstyle{def}{6pt plus 1pt minus 1pt}{6pt plus 1pt minus 1pt}{}{}{\bfseries}{.}{5pt plus 1pt minus 1pt}{}
\newtheoremstyle{rmk}{6pt plus 1pt minus 1pt}{6pt plus 1pt minus 1pt}{}{}{\bfseries}{.}{5pt plus 1pt minus 1pt}{}
\newtheoremstyle{claim}{6pt plus 1pt minus 1pt}{6pt plus 1pt minus 1pt}{}{}{\bfseries}{.}{5pt plus 1pt minus 1pt}{}
\theoremstyle{theor}
\newtheorem{newstatement}{newstatement}
\newtheorem{theorem}[newstatement]{Theorem}
\newtheorem*{theorem*}{Theorem 2}
\newtheorem{proposition}[newstatement]{Proposition}
\theoremstyle{def}
\theoremstyle{rmk}
\newtheorem*{example*}{Example}
\theoremstyle{claim}
\theoremstyle{theor}
\newtheorem{thm}{Theorem}
\let\expandafter\oldproof\csname\string\proof\endcsname
\let\oldendproof\endproof
\renewenvironment{proof}[1][\proofname]{%
  \oldproof[\slshape #1]%
}{\oldendproof}
\def\provedboxcontents#1{$\square$}
\newsavebox\myboxA
\newsavebox\myboxB
\newlength\mylenA
\newcommand*\xoverline[2][0.75]{%
    \sbox{\myboxA}{$\m@th#2$}%
    \setbox\myboxB\null
    \ht\myboxB=\ht\myboxA%
    \dp\myboxB=\dp\myboxA%
    \wd\myboxB=#1\wd\myboxA
    \sbox\myboxB{$\m@th\overline{\copy\myboxB}$}
    \setlength\mylenA{\the\wd\myboxA}
    \addtolength\mylenA{-\the\wd\myboxB}%
    \ifdim\wd\myboxB<\wd\myboxA%
       \rlap{\hskip 0.5\mylenA\usebox\myboxB}{\usebox\myboxA}%
    \else
        \hskip -0.5\mylenA\rlap{\usebox\myboxA}{\hskip 0.5\mylenA\usebox\myboxB}%
    \fi}
\newcommand{\Z}{\mathbb{Z}}
\newcommand{\N}{\mathbb{N}}
\newcommand{\RP}{{\mathbb R\mkern-0.5mu\mathrm P}}
\DeclareMathOperator{\Aut}{Aut}
\DeclareMathOperator{\Pin}{Pin}
\DeclareMathOperator{\Def}{def}
\DeclareMathOperator{\Sp}{Spin}
\DeclareMathOperator{\So}{SO}
\DeclareMathOperator{\Rk}{rk}
\newcommand{\cs}{\mathbin{\#}}
\begin{document}

\author{Rafael Torres}

\title[Exotic non-orientable four-manifolds with prescribed $\pi_1$.]{Exotic non-orientable four-manifolds with prescribed fundamental group.}

\address{Scuola Internazionale Superiori di Studi Avanzati (SISSA)\\ Via Bonomea 265\\34136\\Trieste\\Italy}

\email{rtorres@sissa.it}

\subjclass[2020]{Primary 57R55; Secondary 57K40, 57M05}

\maketitle

\emph{Abstract}: We show that any finitely presented group with an index two subgroup is realized as the fundamental group of a closed smooth non-orientable four-manifold that admits an exotic smooth structure, which is obtained by performing a Gluck twist. The orientation 2-covers of these four-manifolds are diffeomorphic. These two smooth structures remain inequivalent after adding arbitrarily many copies of the product of a pair of 2-spheres and stabilize after adding a single copy of the complex projective plane.

\section{Introduction.}\label{Introduction}

Kreck showed \cite[Theorem 1]{[Kreck]} that for any finitely presented group $\pi$ with a non-trivial element $w_1\in H^1(\pi; \Z/2)$, there is a closed smooth four-manifold $M_{\pi}$ with fundamental group $\pi$ and orientation-character $w_1$ such that the connected sums $M_{\pi}\cs K3$ and $M_{\pi}\cs11(S^2\times S^2)$ are homeomorphic, but not diffeomorphic (see \cite[Section 5.5]{[KasprowskiPowellRay]}). The K3-surface is denoted by $K3$ \cite[Section 1.3]{[GompfStipsicz]}. The purpose of this brief note is to strengthen and extend Kreck's result as follows.


\begin{thm}\label{Theorem A}Let $G$ be a finitely presented group that has a subgroup of index two and denote its deficiency by $\Def(G)$. There are closed smooth non-orientable four-manifolds $M_G$ and $N_G$ with fundamental group $G$ and that satisfy the following properties.\begin{itemize}
\item The Euler characteristic of $M_G$ is $\chi(M_G) = 4 - 2\Def (G)$.
\item There is a homeomorphism\begin{equation*}M_G\rightarrow N_G.\end{equation*}
\item There is no diffeomorphism\begin{equation*}M_G\cs(n - 1)(S^2\times S^2)\rightarrow N_G\cs(n - 1)(S^2\times S^2)\end{equation*}for any $n\in \N$.
\item There is a diffeomorphism\begin{equation*}\widehat{M}_G\rightarrow \widehat{N}_G\end{equation*}between their orientation 2-covers $\widehat{M}_G\rightarrow M_G$ and $\widehat{N}_G\rightarrow N_G$, which admit a $\Sp$ structure.
\item The four-manifold $N_G$ is obtained by performing a Gluck twist along a smoothly embedded 2-sphere $S\hookrightarrow M_G$.
 \item There is a diffeomorphism\begin{equation*}M_G\cs \mathbb{CP}^2\rightarrow N_G\cs \mathbb{CP}^2.\end{equation*}
\end{itemize}

\end{thm}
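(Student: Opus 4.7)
The plan is first to construct $M_G$ by handle theory starting from a deficiency-maximising presentation of $G$, using the surjection $w_1\colon G\twoheadrightarrow \Z/2$ singled out by the given index-two subgroup. Begin from the non-orientable $S^3$-bundle over $S^1$ (which has $\chi=0$, $\pi_1=\Z$ and non-trivial $w_1$); form a connected sum with one copy of $S^1\times S^3$ per additional generator of the presentation; perform surgery along embedded loops representing the relations; and finally take a connected sum with one further $S^2\times S^2$-summand. A handle count gives $\pi_1(M_G)=G$ with the prescribed $w_1$ and $\chi(M_G)=4-2\Def(G)$, and the auxiliary $S^2\times S^2$-summand furnishes an embedded 2-sphere $S\hookrightarrow M_G$ with trivial normal bundle. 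Define $N_G$ as the Gluck twist of $M_G$ along $S$.

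For the homeomorphism $M_G\to N_G$, the plan is to appeal to the topological classification of closed non-orientable four-manifolds with prescribed $\pi_1$, $w_1$, Euler characteristic and equivariant intersection form over $\Z[G]$ in the style of Hambleton--Kreck modified surgery, after verifying that the Gluck twist along $S$ preserves every one of these invariants. For the diffeomorphism $\widehat M_G\to\widehat N_G$ between the orientation 2-covers, the preimage $\widehat S\subset\widehat M_G$ of $S$ is a disjoint pair of 2-spheres exchanged by the covering involution $\tau$ (since $S$ is simply connected it lifts), and the lifted equivariant Gluck twist is smoothly trivialised by sliding one component of $\widehat S$ across the other through a $\tau$-symmetric 3-ball. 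The claim that $\widehat M_G$ admits a $\Sp$ structure is a routine Wu-formula verification using the evenness of the equivariant intersection form inherited by the cover from the construction.

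The inequivalence of $M_G\cs(n-1)(S^2\times S^2)$ and $N_G\cs(n-1)(S^2\times S^2)$ for every $n\in\N$ is the heart of the theorem and where the main obstacle lies. The strategy is to detect it with a $\Pin(2)$-monopole or $\Z/2$-equivariant Bauer--Furuta invariant of the non-orientable manifold $M_G$, equivalently of the equivariant pair $(\widehat M_G,\tau)$; this invariant is designed to be sensitive to the Gluck twist, and its non-triviality persists in equivariant stable cohomotopy under $S^2\times S^2$-stabilisation thanks to the spin condition on the cover supplied by the previous paragraph. The step requiring the most care is verifying that the equivariant Bauer--Furuta classes of $M_G$ and $N_G$ genuinely differ; this should be traced through the explicit reglueing along $S$ and its interaction with the $\Pin(2)$-symmetry of the Seiberg--Witten moduli problem upstairs, and is where the bulk of the work in this argument sits.

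Finally, the diffeomorphism $M_G\cs\CP^2\to N_G\cs\CP^2$ is a classical consequence of the smooth triviality of any Gluck twist after $\CP^2$-stabilisation: the generator of $\pi_1(\SO(3))$ used to reglue a tubular neighbourhood of $S$ is cancelled by sliding $S$ across the $+1$-framed 2-sphere in the punctured $\CP^2$, a light-bulb-type isotopy which becomes available precisely because $\CP^2$ supplies the right framing change. This is compatible with, and indeed complementary to, the non-stability under $S^2\times S^2$-stabilisations proved above.
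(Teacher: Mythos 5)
There is a genuine gap, and it sits exactly where you located ``the bulk of the work.'' First, your choice of the 2-sphere $S$ is not adequate: you take $S$ to be the sphere ``furnished by the auxiliary $S^2\times S^2$-summand,'' but the Gluck twist along the obvious sphere $S^2\times\{pt\}$ in an $S^2\times S^2$ connected summand is smoothly trivial, so with that choice $N_G$ is diffeomorphic to $M_G$ and the theorem fails; nothing in your construction singles out a sphere whose twist could be exotic. The paper's whole point is to import a very specific sphere: Akbulut's $\Sigma\hookrightarrow(S^3\widetilde{\times}S^1)\cs(S^2\times S^2)$, whose Gluck twist is Akbulut's fake manifold $A$, and to build $N_G$ by gluing $A\setminus\nu(\alpha)$ to $M(G)\setminus\nu(\gamma)$; since $\Sigma$ misses the loop used in the gluing, $N_G$ is a Gluck twist on $M_G$. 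Second, your detection mechanism (a $\Z/2$-equivariant Bauer--Furuta or $\Pin(2)$-monopole class) is speculative: no such invariant is known to distinguish these pairs, and gauge-theoretic invariants are precisely the kind of data that do \emph{not} survive arbitrarily many $S^2\times S^2$-stabilizations (recall that the covers here are diffeomorphic and already contain $S^2\times S^2$ summands). The paper instead uses a bordism-level invariant: Stolz's $\eta$-invariant of the $\Pin^+$ Dirac operator, which is a complete invariant of $\Omega^{\Pin^+}_4\cong\Z/16$ and is manifestly unchanged by connected sum with $S^2\times S^2$; combined with Kreck's stable diffeomorphism theorem (stably diffeomorphic iff bordant normal 1-smoothings over the normal 1-type), $[M_G]=0$ versus $[N_G]=[A]=8$ in $\Omega^{\Pin^+}_4$ rules out a diffeomorphism after any number of stabilizations.

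Two further points where your sketch would need repair even granted the above. Your handle/surgery construction of $M_G$ does not control the data that the $\eta$-argument needs: one must know that $M(G)$ admits a $\Pin^+$-structure at all ($w_2=0$) and, more importantly, that \emph{every} such structure is null-bordant, so that $\eta(M_G,\phi)=0$ for all $\phi$; the paper achieves this by realizing $M(G)$ as the boundary of a Wall 5-dimensional thickening of a presentation 2-complex with prescribed $w_1,w_2$, so that each $\Pin^+$-structure extends over the 5-manifold. And your argument that the lifted ``equivariant Gluck twist'' on the two spheres upstairs is smoothly trivial (sliding one lift across the other through a $\tau$-symmetric 3-ball) is not a proof --- simultaneous Gluck twists on two disjoint spheres need not be standard; the paper instead invokes Gompf's result (via Torres) that the orientation 2-cover of $A$ is diffeomorphic to $(S^1\times S^3)\cs 2(S^2\times S^2)$. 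Your last paragraph ($\CP^2$-stabilization killing a Gluck twist) is fine and matches the paper.
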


The first and last three items of Theorem \ref{Theorem A} do not appear in Kreck's work. An advantage of the construction used in the proof of Theorem \ref{Theorem A} over the examples of Kreck is that the Euler characteristic of the four-manifolds considered here is considerably smaller. For the sake of completeness, we show in Section \ref{Section Kreck} that the four-manifolds $M_{\pi}\cs K3$ and $M_{\pi}\cs 11(S^2\times S^2)$ also satisfy the properties listed in the fourth and sixth items of the theorem. Cappell-Shaneson \cite{[CappellShaneson]} showed that if $X$ is a closed smooth four-manifold such that its fundamental group contains an orientation-reversing element of order two, then there is a closed smooth non-orientable four-manifold $Q$ and a simple homotopy equivalence $f: Q\rightarrow X$ whose smooth normal invariant is the nontrivial element in the kernel of $[X; G/O]\rightarrow [X, G/TOP]$. The four-manifold $Q$, however, might well be diffeomorphic to $X$ as shown by Akbulut in the case $X = S^2\times \RP^2$ \cite{[Akbulut1]}. We use the $\eta$-invariant of the twisted Dirac operator to show that the four-manifolds of Theorem \ref{Theorem A} are not diffeomorphic through work of Stolz; see Section \ref{Section Invariant}. The construction of the exotic smooth structure $N_G$ of Theorem \ref{Theorem A} uses an exotic smooth structure $A$ due to Akbulut \cite{[Akbulut1], [Akbulut2]} that arises from performing a Gluck twist to the connected sum $(S^3\widetilde{\times} S^1)\cs(S^2\times S^2)$ of the non-orientable 3-sphere bundle over the circle $S^3\widetilde{\times} S^1$ with a copy of $S^2\times S^2$; see Section \ref{Section Akbulut Torres}. A result of Gompf \cite{[Gompf]} implies that the orientation 2-cover of $A$ is diffeomorphic to $(S^1\times S^3) \cs 2(S^2\times S^2)$ as pointed out by Torres in \cite[Proposition 9]{[Torres]}. Hence, Theorem \ref{Theorem A} provides a multitude of new examples of exotic orientation-reversing free involutions on four-manifolds \cite{[CappellShaneson], [FintushelStern1], [Gompf], [Torres]}. 

Kreck's examples and the four-manifolds of Theorem \ref{Theorem A} satisfy the identity $w_1^2 = w_2$, where $w_i$ is the ith Stiefel-Whitney class. Recently, Kasprowski-Powell \cite{[KasprowskiPowell]} used Kreck's construction to produce four-manifolds that satisfy the second and third items of Theorem \ref{Theorem A} and for which $w_1^2\neq w_2$.

\subsection{Acknowledgements} We thank Daniel Kasprowski and the referee for pointing out several inaccuracies in an earlier version of the note, and their input on how to correct them. In particular, Daniel Kasprowski suggested the argument used to prove Theorem \ref{Theorem 1}. We thank Valentina Bais for her suggestions to improve the exposition. Zahvaljujemo se ekipi PRH v Tolminu, Kobaridu in Ladri za njihovo velikodušno gostoljubje med pisanjem tega članka.




\section{A spectral invariant to distinguish smooth structures.}\label{Section Invariant}

This section contains a brief discussion on $\Pin^+$-structures and on the $\eta$-invariant of a closed smooth four-manifold that is equipped with such a structure. The reader is directed towards \cite{[KirbyTaylor], [Stolz]} for background results. In the sequel, a $\Pin^+$-structure $\phi_M$ on a smooth four-manifold $M$ will be denoted by $(M, \phi_M)$. We also refer to the pair $(M, \phi_M)$ as a $\Pin^+$-manifold. For any such four-manifold $M$ with first cohomology group $H^1(M; \Z/2) = \Z/2$, there are two $\Pin^+$-structures $\{(M, \pm \phi_M)\}$, and their classes $\{[(M, \pm \phi_M)]\}$ are mutual inverses in the fourth $\Pin^+$-bordism group $\Omega^{\Pin^+}_4 = \Z/16$ \cite[Corollary 6.4]{[Stolz]}, \cite[p. 190, Theorem 5.2]{[KirbyTaylor]}. This group is generated by the real projective 4-space \cite[Theorem 5.2]{[KirbyTaylor]}, and $\eta(\RP^4, \pm \phi_{\RP^4}) = \pm \frac{1}{8} \mod 2\Z$ \cite[Corollary 5.4]{[Stolz]}. The following theorem is paramount for our purposes.

\begin{theorem}\label{Theorem Stolz}Stolz \cite{[Stolz]}. Let $(M, \phi_M)$ be a closed smooth $\Pin^+$- four-manifold.

$\bullet$ The invariant $\eta(M, \phi_M) \mod 2\Z$ is a complete $\Pin^+$-bordism invariant.

$\bullet$ If $M$ is simply connected, then $\eta(M, \phi_M) = \frac{1}{16}\sigma(M) \mod 2\Z$, where $\sigma(M)$ is the signature. 


\end{theorem}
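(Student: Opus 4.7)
The plan breaks into three parts: bordism invariance of $\eta(M, \phi_M) \mod 2\Z$, injectivity of the induced homomorphism $\Omega^{\Pin^+}_4 \to \R/2\Z$, and the signature formula in the simply connected case. The main analytic tool is the Atiyah-Patodi-Singer index theorem applied to the twisted Dirac operator of a $\Pin^+$-structure; the main topological input is the Kirby-Taylor computation $\Omega^{\Pin^+}_4 = \Z/16$ together with the value $\eta(\RP^4, \phi_{\RP^4}) = 1/8 \mod 2\Z$ on the generator.

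For bordism invariance, suppose $(M, \phi_M) = \partial (W, \Psi)$ for a compact $\Pin^+$-5-manifold. The APS formula applied to the twisted Dirac operator $D_W$ yields
\begin{equation*}
\operatorname{ind}(D_W, \mathrm{APS}) \;=\; \int_W \alpha_W \;-\; \tfrac{1}{2}\bigl(\eta(M, \phi_M) + h(M, \phi_M)\bigr),
\end{equation*}
where $\alpha_W$ is the local index density. In dimension five no characteristic form of the appropriate degree is available for the $\Pin^+$ Dirac operator, so $\int_W \alpha_W = 0$, and the underlying quaternionic structure of the operator forces $\operatorname{ind}(D_W, \mathrm{APS})$ and $h(M, \phi_M)$ to be even. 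Reading the identity modulo $2\Z$ gives $\eta(M, \phi_M) \equiv 0 \mod 2\Z$ whenever $(M, \phi_M)$ bounds, so $\eta \mod 2\Z$ descends to a homomorphism $\eta : \Omega^{\Pin^+}_4 \to \R/2\Z$.

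Completeness is then immediate from the Kirby-Taylor computation. Since $\Omega^{\Pin^+}_4 = \Z/16$ is generated by $[\RP^4, \phi_{\RP^4}]$ and $1/8$ has order exactly $16$ in $\R/2\Z$, the induced homomorphism is injective, and $\eta \mod 2\Z$ is a complete $\Pin^+$-bordism invariant.

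For the simply connected case, $\pi_1(M) = 0$ forces $w_1(M) = 0$, so $M$ is orientable and the $\Pin^+$-structure refines to a $\Sp$-structure. Both $\eta(M, \phi_M)$ and $\sigma(M)/16 \mod 2\Z$ thus define homomorphisms $\Omega^{\Sp}_4 = \Z \to \R/2\Z$, so it suffices to verify their equality on the $\Sp$-generator $K3$. Identifying the image of $[K3]$ in $\Omega^{\Pin^+}_4$ with $8 \cdot [\RP^4, \phi_{\RP^4}]$, by a direct computation of characteristic numbers or by APS on an explicit $\Pin^+$-bordism, gives $\eta(K3) = 8 \cdot 1/8 \equiv 1 \mod 2\Z$, which matches $\sigma(K3)/16 \equiv 1 \mod 2\Z$. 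Additivity under $\Sp$-bordism then extends the identity $\eta(M, \phi_M) = \sigma(M)/16 \mod 2\Z$ to every simply connected $\Pin^+$-manifold. The most delicate step throughout is the bordism invariance in dimension five, whose content lies in the analytic structure of the $\Pin^+$ Dirac operator and the identification of the correct local index density.
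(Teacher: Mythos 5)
You should first note that the paper does not prove this statement at all: it is quoted verbatim from Stolz \cite{[Stolz]} (with the value $\eta(\RP^4,\pm\phi_{\RP^4})=\pm\tfrac18$ cited from Corollary 5.4 there), so you are attempting to reprove the external analytic result rather than an argument the paper contains. Your overall architecture --- bordism invariance by an index-theoretic argument, injectivity on $\Omega^{\Pin^+}_4=\Z/16$ from the value on $\RP^4$, and reduction of the simply connected case to a generator --- is the right shape, and the injectivity step itself is fine (an element of order $16$ in $\R/2\Z$ on the generator of $\Z/16$). But the bordism-invariance step is genuinely gapped: you apply the even-dimensional APS formula verbatim to a five-dimensional $\Pin^+$ manifold $W$ with boundary $M$, where no integer index of the ungraded Dirac operator exists to put on the left-hand side. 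The correct statement needs the Clifford/quaternionic-linear refinement of APS, and the two assertions you lean on ("the local index density vanishes" and "the quaternionic structure forces $\operatorname{ind}$ and $h$ to be even") are precisely the analytic content of Stolz's proof; as written they are restatements of what must be shown, not proofs.

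The simply connected case has a second gap. Your evaluation on $K3$ rests on the identification $[K3]=8\,[\RP^4,\phi_{\RP^4}]$ in $\Omega^{\Pin^+}_4$. The first justification you offer, a direct computation of characteristic numbers, cannot work: all Stiefel--Whitney numbers of $K3$ vanish ($w_1=w_2=0$ and $\chi(K3)=24$ is even), yet its class is $8\neq 0$ in $\Z/16$, so characteristic numbers do not detect it. The second, "APS on an explicit $\Pin^+$-bordism," is not exhibited, and in practice the class of a spin $4$-manifold in $\Omega^{\Pin^+}_4$ is computed using the $\eta$-invariant itself (or an equivalent index computation), which would make your argument circular. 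What is missing is an independent computation of $\eta$ for an orientable spin $4$-manifold viewed as a $\Pin^+$ manifold --- i.e., relating the $\Pin^+$ Dirac operator in the orientable case to the chiral Dirac operator, whose index is $-\sigma/16$ --- which is how the identity $\eta\equiv\tfrac1{16}\sigma \bmod 2\Z$ is actually obtained; once that is in place, your reduction to the generator of $\Omega^{\Sp}_4$ is unnecessary or immediate.
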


 Stolz used these properties along with the modified surgery theory methods developed by Kreck \cite{[Kreck], [Kreck1]} to show that Cappell-Shaneson's exotic $\RP^4$ is not stably diffeomorphic to the standard one \cite[Theorems B and 7.4]{[Stolz]}. By building heavily on these results, we record the following generalization of Stolz's result.

\begin{theorem}\label{Theorem Stolz Main}Let $M_1$ and $M_2$ be closed non-orientable smooth four-manifolds that admit a $\Pin^+$-structure and that have the same Euler characteristic. If for every $\Pin^+$-structure $\{(M_i, \phi_{M_i}): i = 1, 2\}$, the values of the corresponding $\eta$-invariants satisfy\begin{equation}\label{Different Values}\eta(M_1, \phi_{M_1}) \neq \eta(M_2, \phi_{M_2}) \mod 2\Z,\end{equation}then there is no diffeomorphism\begin{equation}M_1\cs(n - 1)(S^2\times S^2) \rightarrow M_2\cs(n - 1)(S^2\times S^2)\end{equation} for any $n\in \N$.
\end{theorem}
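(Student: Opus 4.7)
The plan is to argue by contradiction, combining the completeness of $\eta$ as a $\Pin^+$-bordism invariant (Theorem~\ref{Theorem Stolz}) with the fact that connected sum with $S^2\times S^2$ does not alter the $\Pin^+$-bordism class.

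Suppose, for some $n\in\N$, there is a diffeomorphism $f\colon M_1\cs(n-1)(S^2\times S^2)\to M_2\cs(n-1)(S^2\times S^2)$. Since $S^2\times S^2$ is simply connected and carries a unique (Spin, hence $\Pin^+$) structure, the restriction arising from the connected-sum decomposition identifies the set of $\Pin^+$-structures on $M_i\cs(n-1)(S^2\times S^2)$ with the set of $\Pin^+$-structures on $M_i$. In particular, any choice $\phi_{M_1}$ on $M_1$ extends to a unique $\Pin^+$-structure $\Phi_1$ on $M_1\cs(n-1)(S^2\times S^2)$. Transport $\Phi_1$ through $f$ to obtain a $\Pin^+$-structure $\Phi_2$ on $M_2\cs(n-1)(S^2\times S^2)$, and let $\phi_{M_2}$ denote the $\Pin^+$-structure on $M_2$ corresponding to $\Phi_2$ under the analogous identification.

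Since $S^2\times S^2$ bounds $D^3\times S^2$ as a $\Pin^+$-manifold (indeed as a Spin manifold), the $\Pin^+$-bordism class is additive under this stabilization:
\[
[(M_i\cs(n-1)(S^2\times S^2),\Phi_i)]=[(M_i,\phi_{M_i})]\in\Omega^{\Pin^+}_4=\Z/16
\]
for $i=1,2$. The $\Pin^+$-diffeomorphism $f$ identifies the two left-hand sides, whence
\[
[(M_1,\phi_{M_1})]=[(M_2,\phi_{M_2})]\in\Omega^{\Pin^+}_4.
\]
Because $\eta \bmod 2\Z$ is a $\Pin^+$-bordism invariant by Theorem~\ref{Theorem Stolz}, this forces $\eta(M_1,\phi_{M_1})=\eta(M_2,\phi_{M_2}) \bmod 2\Z$, contradicting the hypothesis (\ref{Different Values}) applied to the particular pair $(\phi_{M_1},\phi_{M_2})$ just produced.

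The delicate part is the $\Pin^+$-structure bookkeeping: one must justify both the additivity of the $\Pin^+$-bordism class under connected sum with a $\Pin^+$-null-bordant summand, and the canonical identification of $\Pin^+$-structures on the stabilized manifold with those on each summand (cap the separating 3-sphere with a 4-disk, using that $S^2\times S^2$ is simply connected so that there is no ambiguity in the extension). The universal quantifier in the hypothesis is what makes this argument go through: whichever structure $\phi_{M_2}$ the construction produces on $M_2$ from the choice of $\phi_{M_1}$, the contradiction applies to that specific pair. The equal-Euler-characteristic hypothesis plays no role in the bordism argument itself but is necessary for a diffeomorphism between the stabilized manifolds to be conceivable in the first place.
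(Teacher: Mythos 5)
Your argument is correct, and it is genuinely different from (and more elementary than) the one in the paper. The paper deduces the statement from Kreck's modified surgery theory: it invokes Kreck's Theorem C (stable diffeomorphism is equivalent to having bordant normal 1-smoothings over the common normal 1-type $\xi$), the Kasprowski--Land--Powell--Teichner correspondence between stable diffeomorphism classes and $\Omega^{\xi}_4/\Aut(\xi)$, the forgetful map $\Omega^{\xi}_4\rightarrow \Omega^{\Pin^+}_4\cong \Z/16$ with $\Aut(\xi)=\Z/2$, and the fact that the $\eta$-invariant detects the classes of $\Omega^{\Pin^+}_4/(x\sim -x)$. You bypass normal 1-types and normal 1-smoothings entirely: a hypothetical diffeomorphism of the stabilized manifolds transports $\Pin^+$-structures, the bijection of $\Pin^+$-structures under connected sum with the simply connected, $\Pin^+$-null-bordant $S^2\times S^2$ shows the stabilization does not change the class in $\Omega^{\Pin^+}_4$, and Stolz's theorem (only its ``bordism invariance'' half) then forces $\eta(M_1,\phi_{M_1})=\eta(M_2,\phi_{M_2})$ for some pair of structures, contradicting the hypothesis. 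This uses only the easy implication (diffeomorphic $\Rightarrow$ $\Pin^+$-bordant), which is all the stated theorem needs, and it makes transparent why the universal quantifier over $\Pin^+$-structures is required -- it plays the role that the quotient by $\Aut(\xi)$, i.e.\ by $x\sim -x$, plays in the paper. The paper's heavier framework buys more than the statement asks for (it is what one would need to decide when the stabilized manifolds \emph{are} diffeomorphic, as in Theorem \ref{Theorem Kreck}), while your argument is self-contained modulo the standard facts that $\Pin^+$-structures form a torsor over $H^1(-;\Z/2)$, that connected sum is $\Pin^+$-bordant to disjoint union, and that $(S^2\times S^2,\phi_{S^2\times S^2})$ bounds; your closing remark that the equal-Euler-characteristic hypothesis is not needed for this direction is also accurate.
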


\begin{proof}Recall that a normal 1-type $\xi: B\rightarrow BO$ of a four-manifold $M$ determines a bordism theory $\Omega^\xi_4$ of four-manifolds that admit a lift of their stable normal bundle across the fibration $\xi$ \cite{[Kreck], [Kreck1]}. A choice of normal 1-smoothing determines a class in the group $\Omega^\xi_4$. Different normal 1-smoothings need not define the same class in $\Omega^\xi_4$. The main ingredient in the proof of Theorem \ref{Theorem Stolz Main} is the following result.\begin{theorem}\label{Theorem Kreck}Kreck \cite[Theorem C]{[Kreck1]}. Let $M_1$ and $M_2$ be two closed smooth four-manifolds with the same Euler characteristic and that share the same normal 1-type $\xi: B\rightarrow BO$. The four-manifolds $M_1$ and $M_2$ admit normal 1-smoothings that are bordant in $\Omega^\xi_4$, if and only if there is an $n\in \N$ such that the connected sums $M_1\cs (n_1 - 1)(S^2\times S^2)$ and $M_2\cs(n_2 - 1)(S^2\times S^2)$ are diffeomorphic. \end{theorem}

We now argue that Theorem \ref{Theorem Stolz Main} is a corollary of Theorem \ref{Theorem Kreck}. As it was observed by Kasprowski-Land-Powell-Teichner \cite[Theorem 1.1]{[KasprowskiLandPowellTeichner]}, there is a one-to-one correspondence between stable diffeomorphism classes of 4-manifolds with the same normal 1-type and $\Omega^\xi_4/\Aut(\xi)$. In the presence of a $\Pin^+$-structure, there is a map $\Omega^\xi_4 \rightarrow \Omega^{\Pin^+}_4\cong \Z/16$ \cite{[Giambalvo], [KirbyTaylor]} and $\Aut(\xi) = \Z/2$ \cite[\S 5]{[Kreck]}. We thank the referee for pointing this out. Moreover, the four-manifolds $M_1$ and $M_2$ of Theorem \ref{Theorem Stolz Main} do not have the same normal 1-type since they both admit a $\Pin^+$-structure \cite[\S 2 Proposition 2]{[Kreck1]}, and the $\eta$-invariant detects the nine equivalence classes\begin{center}$\{0, \pm 1, \pm 2, \pm 3\pm 4, \pm 5, \pm 6, \pm 7, 8\} \in \Omega^{\Pin^+}_4/(x\sim -x)$.\end{center}\end{proof}

\section{Proof of Theorem \ref{Theorem A}.}\label{Section Proof} 

\subsection{A result of Akbulut}\label{Section Akbulut Torres} For the sake of clarity in the proof of Theorem \ref{Theorem A}, we now summarize the result of Akbulut and the observations on it due to Torres that were mentioned in the introduction. A Gluck twist along a smoothly embedded 2-sphere $S\subset X$ with tubular neighborhood $\nu(S) = D^2\times S^2$ is the cut-and-paste construction $X_S = (X\setminus \nu(S))\cup_{\varphi} (D^2\times S^2)$, where the gluing diffeomorphism is $\varphi(\theta, x) = (\theta, r_\theta(x))$ for every $(\theta, x)\in S^1\times S^2$ and $x\mapsto r_{\theta}(x)$ is the nontrivial element in $\pi_1(\So(3)) = \Z/2$  \cite[Section 6.1]{[Akbulut]}. 

\begin{theorem}\label{Theorem Akbulut}Akbulut \cite{[Akbulut2]}. There is a smoothly embedded 2-sphere\begin{equation}\label{2-sphere}\Sigma\hookrightarrow (S^3\widetilde{\times} S^1)\cs(S^2\times S^2)\end{equation}such that performing a Gluck twist along it yields a four-manifold $A$ that is homeomorphic, but not diffeomorphic to $(S^3\widetilde{\times} S^1)\cs(S^2\times S^2)$ and that satisfies the following properties.
\begin{itemize}
\item Torres \cite{[Torres]}. The value of the $\eta$-invariant for both $\Pin^+$-structures $(A, \pm \phi_A)$ is $\eta(A, \pm \phi_A) = 1 \mod 2\Z$.
\item Torres \cite{[Torres]}. The orientation 2-cover of $A$ is diffeomorphic to the connected sum $(S^1\times S^3)\cs2(S^2\times S^2)$.
\end{itemize}
\end{theorem}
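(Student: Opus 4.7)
The plan is to treat this theorem as a compilation of three separate statements (Akbulut's construction, Torres's $\eta$ computation, Torres's 2-cover identification) and outline each in turn. First, for the construction itself, I would reproduce Akbulut's argument: draw the standard handle decomposition of $(S^3\widetilde{\times}S^1)\cs(S^2\times S^2)$ with a 1-handle attached in the orientation-reversing way together with the usual pair of 2-handles for $S^2\times S^2$, locate a specific 2-sphere $\Sigma$ built from the core of one of the 2-handles of the $S^2\times S^2$ summand tubed together with a disk passing through the non-orientable 1-handle so that its normal bundle is trivial, and form $A$ by the Gluck twist along $\Sigma$. The homeomorphism $A \cong (S^3\widetilde{\times}S^1)\cs(S^2\times S^2)$ follows from the observation that a Gluck twist preserves $\pi_1$, the equivariant intersection form, and both Stiefel-Whitney classes, combined with Freedman-Quinn classification for four-manifolds with $\pi_1 = \Z$ and $w_1\neq 0$ (in the form developed by Hambleton-Kreck-Teichner).

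For the first bullet, I would invoke Stolz's theorem (stated above) which says $\eta \mod 2\Z$ is a complete $\Pin^+$-bordism invariant and $\Omega_4^{\Pin^+}\cong \Z/16$ is generated by $(\RP^4,\pm\phi_{\RP^4})$ with $\eta = \pm\tfrac{1}{8}$. Since the standard $(S^3\widetilde{\times}S^1)\cs(S^2\times S^2)$ bounds a $\Pin^+$ 5-manifold (take the non-orientable $D^3$-bundle over $S^1$ boundary-connect-summed with $D^3\times S^2$, both of which inherit $\Pin^+$ structures), its $\eta$-invariant vanishes. The task then reduces to computing the change in $\Pin^+$-bordism class produced by the Gluck twist along $\Sigma$. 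This is done by building a $\Pin^+$ 5-dimensional cobordism $W$ between $(S^3\widetilde{\times}S^1)\cs(S^2\times S^2)$ and $A$ from the trace of the Gluck surgery (remove $D^2\times S^2 \times I$ and reglue by $\varphi$), and evaluating the relative $\eta$ via the Atiyah-Patodi-Singer index theorem applied to the twisted Dirac operator. The computation should output $[A] = 8 \in \Z/16$, giving $\eta(A,\pm\phi_A) = 8\cdot(\pm\tfrac{1}{8}) = \pm 1 \equiv 1 \mod 2\Z$ (the two signs coincide mod $2\Z$). This APS-index evaluation is the main obstacle, and indeed it is the heart of Torres's argument that I would cite in detail.

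For the second bullet, I would first identify the orientation 2-cover of the base: the universal cover of $S^3\widetilde{\times}S^1$ is $S^3\times\R$, so its orientation double cover is $S^1\times S^3$, and pulling back the connected sum along the 2-to-1 map duplicates the simply connected summand, yielding $(S^1\times S^3)\cs 2(S^2\times S^2)$. Since $\Sigma$ can be arranged to lie in a simply connected region, it is null-homotopic, hence lifts to two disjoint copies $\widetilde{\Sigma}_1\sqcup\widetilde{\Sigma}_2$ in the cover; performing the lifted Gluck twists along both spheres produces $\widehat{A}$. Finally, Gompf's theorem on Gluck twists in the presence of an $S^2\times S^2$ summand in a simply connected 4-manifold (specifically that such a twist yields a diffeomorphic manifold) applies to each $\widetilde{\Sigma}_i$ in turn, yielding $\widehat{A}\cong(S^1\times S^3)\cs 2(S^2\times S^2)$, as claimed.
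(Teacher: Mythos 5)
The paper offers no proof of this statement: it is presented purely as a summary citing Akbulut~\cite{[Akbulut2]} for the construction and Torres~\cite{[Torres]} for the two bullet points, so there is nothing in the source to compare your reconstruction against line by line. Evaluating your attempt on its own terms, the second and third parts are broadly in the right spirit, but the argument you give for the first bullet contains a contradiction. You propose to build a $\Pin^+$ five-dimensional cobordism $W$ from the trace of the Gluck surgery between $(S^3\widetilde{\times} S^1)\cs(S^2\times S^2)$ and $A$, and then to read off $[A]=8$ from an APS computation on $W$. But Stolz's theorem (quoted in the paper) says $\eta\bmod 2\Z$ is a complete $\Pin^+$-\emph{bordism} invariant: if such a $W$ existed carrying a $\Pin^+$-structure compatible with the ones on its two boundary components, it would force $\eta(A,\phi_A)=\eta\bigl((S^3\widetilde{\times}S^1)\cs(S^2\times S^2),\phi\bigr)=0\bmod 2\Z$, directly contradicting the desired conclusion $\eta(A,\pm\phi_A)=1$. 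The obstruction is exactly the point of the Gluck construction: the regluing map $\varphi$ represents the nontrivial class in $\pi_1(\So(3))$, and this reframing prevents the $\Pin^+$-structure from extending over any naive cobordism built from the Gluck trace. So the route you sketch cannot output $[A]=8$; it would output $[A]=0$. Torres's actual computation of the $\Pin^+$-bordism class of $A$ must proceed differently, for instance by exhibiting an explicit relationship to a manifold whose class in $\Omega^{\Pin^+}_4$ is already known, rather than via a cobordism that the theorem itself shows cannot exist.

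Two smaller points. In justifying the homeomorphism $A\cong(S^3\widetilde{\times}S^1)\cs(S^2\times S^2)$ you invoke the Hambleton--Kreck--Teichner classification, but that framework is for $\pi_1=\Z/2$; here $\pi_1=\Z$ with $w_1\neq 0$, so the relevant topological classification is due to Freedman--Quinn (or, in the original source, Akbulut establishes the homeomorphism in~\cite{[Akbulut1]} by other means). For the second bullet, your identification of the orientation $2$-cover and the observation that $\Sigma$ lifts to two disjoint spheres is correct, but one should be careful stating Gompf's result precisely: what is used is that a Gluck twist on a $2$-sphere in a simply connected $4$-manifold that splits off an $S^2\times S^2$ summand with the sphere placed suitably does not change the diffeomorphism type. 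This is applied once per lifted sphere; the claim is consistent with what the paper asserts via~\cite{[Gompf]} and~\cite{[Torres]}, but the hypothesis about the position of the sphere relative to the $S^2\times S^2$ summand should be verified rather than taken for granted.
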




\subsection{Construction of a four-manifold with prescribed fundamental group}\label{Section Standard}We occupy ourselves with a proof of the following theorem in this section. The deficiency of a group $G$ is denoted by $\Def(G)$.

\begin{theorem}\label{Theorem 1} Let $G$ be a finitely presented group that has a subgroup of index two. There is a closed smooth non-orientable four-manifold $M(G)$ with Euler characteristic $\chi(M(G)) = 2 - 2\Def(G)$, second Stiefel-Whitney class $w_2(M(G)) = 0$, whose fundamental group is $\pi_1(M(G)) \cong G$ and such that $[(M(G), \phi_{M(G)})] = 0\in \Omega^{\Pin^+}_4$ for every $\Pin^+$-structure $(M(G), \phi_{M(G)})$. 
\end{theorem}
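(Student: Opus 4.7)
The plan is to adapt Kervaire's classical construction to the non-orientable $\Pin^+$ setting. Fix a presentation achieving the deficiency, $G = \langle g_1,\ldots,g_k \mid r_1,\ldots,r_\ell\rangle$ with $k-\ell = \Def(G)$, and a surjection $w\colon G \to \Z/2$ coming from the index-two subgroup. First I would construct a model 4-manifold $M_0$ whose fundamental group is free on $k$ generators and whose orientation character realizes $w$, by forming the connected sum over $i=1,\ldots,k$ of $S^1\times S^3$ (when $w(g_i)=0$) and of $S^1\widetilde{\times} S^3$ (when $w(g_i)=1$). At least one $S^1\widetilde{\times} S^3$ summand appears because $w$ is non-trivial, and each summand has vanishing $H^2(-;\Z/2)$, so $M_0$ is non-orientable, satisfies $w_2 = 0$, and admits $\Pin^+$-structures.

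Next I would kill the relators by surgery. Each $r_j$ lies in $\ker w$ (since $r_j = 1$ in $G$), so it is represented by an embedded circle $\gamma_j\subset M_0$ with orientation-preserving tubular neighborhood $S^1\times D^3$. There are two framings up to isotopy, differing by a Gluck twist; I would pick the one for which the $\Pin^+$-structure on $M_0$ extends across the 5-dimensional 2-handle $D^2\times D^3$ attached along $\gamma_j$. Such a framing is unique because the two $\Pin^+$-structures on $S^1\times D^3$ correspond to the two framings, and exactly one of them extends to the simply-connected handle. Iterating over all relators produces $M(G)$ together with a $\Pin^+$-cobordism $W$ from $M_0$ to $M(G)$. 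Van Kampen gives $\pi_1(M(G))=G$; each surgery increases $\chi$ by $2$, so $\chi(M(G)) = 2-2k+2\ell = 2-2\Def(G)$; and $w_2(M(G)) = 0$ because the $\Pin^+$-structure extends across $W$.

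To verify that $[(M(G),\phi)] = 0 \in \Omega^{\Pin^+}_4$ for \emph{every} $\Pin^+$-structure $\phi$, I would argue in two stages. First, every $\phi$ on $M(G)$ extends to $W$: since $W$ deformation-retracts onto $M_0$ with 2-cells attached along the $\gamma_j$, we have $\pi_1(W) = G$, and the restriction $H^1(W;\Z/2)\to H^1(M(G);\Z/2)$ is an isomorphism, so every $\Pin^+$-structure on $M(G)$ pulls back to $W$. Second, the induced $\Pin^+$-structure on $M_0$ bounds. The summand $S^1\times S^3 = \partial(S^1\times D^4)$ bounds a spin (hence $\Pin^+$) 5-manifold for either of its two spin structures, and $S^1\widetilde{\times} S^3 = \partial(S^1\widetilde{\times} D^4)$ bounds a $\Pin^+$ 5-manifold homotopy equivalent to $S^1$; because $H^1(S^1\widetilde{\times} D^4;\Z/2)\to H^1(S^1\widetilde{\times} S^3;\Z/2)$ is an isomorphism, both $\Pin^+$-structures on this summand also bound. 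Boundary connected sums of these null-cobordisms realize every $\Pin^+$-structure on $M_0$ as a boundary, and concatenating with $W$ yields a $\Pin^+$-null-cobordism of $(M(G),\phi)$.

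The main technical point is the framing step: one has to verify that the two framings of $\gamma_j$ give the two distinct $\Pin^+$-structures on the tubular neighborhood, so that exactly one framing allows the $\Pin^+$-structure to extend across the 5-dimensional 2-handle and hence permits $W$ to be given a global $\Pin^+$-structure. This is the non-orientable analogue of the spin surgery lemma and reduces to the standard computation of the lift of the non-trivial loop in $\SO(3)$ to $\Pin^+(3)$. Once this framing is fixed, the $\pi_1$, Euler characteristic, Stiefel-Whitney, and bordism verifications are all routine.
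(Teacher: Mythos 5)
Your construction is essentially correct, but it follows a genuinely different route from the paper. You build $M(G)$ by the classical Kervaire-type procedure: start from a connected sum of copies of $S^1\times S^3$ and $S^1\widetilde{\times}S^3$ realizing the free group on the generators together with the orientation character $w$, then surger out embedded circles representing the relators (all of which lie in $\ker w$, so have trivial normal bundle), choosing for each circle the framing for which a fixed $\Pin^+$-structure extends across the five-dimensional $2$-handle; the trace $W$ of these surgeries is then a $\Pin^+$-cobordism, and since $W$ is obtained from $M(G)\times I$ by attaching $3$-handles, $H^1(W;\Z/2)\to H^1(M(G);\Z/2)$ is an isomorphism, so every $\Pin^+$-structure on $M(G)$ is cobordant to one on $M_0$, all of which bound by your explicit null-cobordisms $S^1\times D^4$ and $S^1\widetilde{\times}D^4$ (boundary-connect-summed). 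The paper instead takes a presentation $2$-complex $K(G)$ realizing the deficiency, prescribes $w_1\neq 0$ and $w_1^2+w_2=0$ on it, and invokes Wall's theorem on $5$-dimensional thickenings to get a compact $5$-manifold $Y(G)$ with $\partial Y(G)=M(G)$; all the claimed properties, including triviality in $\Omega^{\Pin^+}_4$ for every $\Pin^+$-structure, then follow at once because every structure on the boundary extends over $Y(G)$. Your approach avoids Wall's thickening theorem and is more hands-on, at the price of the framing analysis (the $\Pin^+$ analogue of the spin surgery lemma) and the two-stage null-bordism argument; the paper's approach gets the null-bordism for free since $M(G)$ literally bounds the $\Pin^+$ $5$-manifold $Y(G)$, and the Euler characteristic count $\chi(\partial Y(G))=2\chi(K(G))=2-2\Def(G)$ agrees with your handle count. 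Two minor points to tighten in your write-up: the phrase ``the two $\Pin^+$-structures on $S^1\times D^3$ correspond to the two framings'' should be restated as the standard fact that changing the framing by the generator of $\pi_1(\So(3))$ (which maps onto $\pi_1(\So(4))$) changes the induced structure on the attaching region by the generator of $H^1(S^1;\Z/2)$, so exactly one framing allows the extension; and you should note explicitly that your choice of extendable framings is what guarantees $w_2(M(G))=0$ (the existence of a $\Pin^+$-structure), since the other framing choices could produce a non-$\Pin^+$ manifold.
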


A statement similar to Theorem \ref{Theorem 1} appears in \cite[p. 253]{[Kreck]}. 

\begin{proof}We are indebted to Daniel Kasprowski for suggesting the following argument. Let $K(G)$ be a 2-complex with $\pi_1(K(G)) = G$ associated to a presentation of $G$ that realizes its deficiency so that its Euler characteristic is $\chi(K(G)) = 1 - \Def(G)$. Choose $K(G)$ to have non-trivial first Stiefel-Whitney class and second Stiefel-Whitney class $w_1^2(K(G)) + w_2(K(G)) = 0$.  A result of Wall says that there is a five-dimensional thickening $Y(G)$ of $K(G)$ whose Stiefel-Whitney classes are given by $w_i(\nu(Y(G)) = w_i(K(G))$ for $i = 1, 2$ and for which there is an isomorphism $\pi_1(\partial Y(G)) \rightarrow \pi_1(K(G))$ \cite[Proposition 5.1]{[Wall]}. The boundary $\partial Y_G = M(G)$ is a closed smooth non-orientable four-manifold with fundamental group $G$, Euler characteristic $\chi(M(G)) = 2 - 2\Def(G)$ (see \cite[Proposition II.2]{[KreckSchafer]}), and vanishing second Stiefel-Whitney class $w_2(M(G)) = 0$. There is a total of $k = \Rk_{\Z/2}(H^1(M(G); \Z/2))$ (the rank of the cohomology group) $\Pin^+$-structures $\{(M(G), \phi^i_{M(G)}): i = 1, \ldots, 2^k\}$  \cite[Corollary 6.4]{[Stolz]}. Each of these $\Pin^+$-structures arises from a $\Pin^+$-structure on $Y(G)$. Thus, $[(M(G), \phi^i_{M(G)})] = 0\in \Omega^{\Pin^+}_4$ for every $1\leq i \leq 2^k$.\end{proof}

\subsection{Two constructions of the four-manifold $M_G$ of Theorem \ref{Theorem A}}The four-manifold of Theorem \ref{Theorem A} is defined to be\begin{equation}\label{Construction M}M_G: = M(G)\cs (S^2\times S^2).\end{equation}Every $\Pin^+$-structure $(M_G, \phi_{M_G})$ arises from $\Pin^+$-structures\begin{center}$(M(G), \phi_{M(G)})$ and $(S^2\times S^2, \phi_{S^2\times S^2})$,\end{center}and we have that\begin{equation}\eta(M_G, \phi_{M_G}) = 0 \mod 2\Z\end{equation}and\begin{equation*}[(M_G, \phi_{M_G})] = [(M(G), \phi)] = 0\in \Omega^{\Pin^+}_4\end{equation*} by construction. A slight modification of the assemblage (\ref{Construction M}), which yields a useful expression of the four-manifold $M_G$ is as follows.  Let $\alpha_0\subset (S^3\widetilde{\times} S^1)\cs(S^2\times S^2)$ be a simple loop whose homotopy class generates the group $\pi_1((S^3\widetilde{\times} S^1)\cs(S^2\times S^2)) = \Z$. Let $\gamma \subset M(G)$ be an orientation-reversing simple loop whose homotopy class is a generator of $\pi_1(M(G)) = G$. The tubular neighborhoods of the two loops $\alpha$ and $\gamma$ are diffeomorphic to $D^3\widetilde{\times} S^1$ and\begin{equation*}(S^3\widetilde{\times} S^1)\cs(S^2\times S^2)\setminus \nu(\alpha_0) = (D^3\widetilde{\times} S^1)\cs(S^2\times S^2).\end{equation*} We can glue these two four-manifolds along these loops to produce\begin{equation}\label{New Construction 1}M_G  = (M(G)\setminus \nu(\gamma))\cup ((D^3\widetilde{\times}S^1)\cs(S^2\times S^2))= M(G)\cs(S^2\times S^2).\end{equation}




\subsection{Construction of the four-manifold $N_G$ of Theorem \ref{Theorem A}, its orientation 2-cover and the existence of a homeomorphism}\label{Section Exotic}Let $A$ be the four-manifold of Theorem \ref{Theorem Akbulut} constructed by Akbulut that is homeomorphic, but not diffeomorphic to $(S^3\widetilde{\times} S^1)\cs (S^2\times S^2)$ \cite[Theorem 1]{[Akbulut1]}. Let $\alpha\subset A$ be a simple loop whose homotopy class generates the fundamental group $\pi_1(A) = \Z$. Let $\gamma \subset M(G)$ be an orientation-reversing simple loop whose homotopy class is a generator of $\pi_1(M(G)) = G$, where $M(G)$ is the closed smooth non-orientable 4-manifold of Theorem \ref{Theorem 1}. Build\begin{equation}\label{Construction N}N_G: = (M(G)\setminus \nu(\gamma))\cup (A\setminus \nu(\alpha))\end{equation}such that the induced $\Pin^+$-structures induced on the boundary components match. There are homeomorphisms\begin{equation}\label{Homeo 1}N_G\rightarrow M(G)\cs(S^2\times S^2)\rightarrow M_G\end{equation}by construction given that the four-manifold $A$ is homeomorphic to the connected sum $(S^3\widetilde{\times} S^1)\cs(S^2\times S^2)$. Moreover, the orientation 2-cover $\widehat{N_G}\rightarrow N_G$ is diffeomorphic to orientation 2-cover of (\ref{New Construction 1}) given that the orientation 2-cover of $A$ is diffeomorphic to $(S^1\times S^3)\cs2(S^2\times S^2)$ by the third clause of Theorem \ref{Theorem Akbulut}. 

The existence of a smoothly embedded 2-sphere $S\hookrightarrow M_G$ with trivial tubular neighborhood $\nu(S) = D^2\times S^2$ such that performing a Gluck twist to it produces the 4-manifold $N_G$ follows from Theorem \ref{Theorem Akbulut}. Indeed, the 2-sphere\begin{equation}\label{2-sphere}\Sigma\hookrightarrow (S^3\widetilde{\times} S^1)\cs(S^2\times S^2)\end{equation}that was constructed by Akbulut is disjoint from the simple loop $\alpha_0\subset (S^3\widetilde{\times} S^1)\cs(S^2\times S^2)$ used in the construction of $M_G$ in (\ref{New Construction 1}) by a general position argument: the sum of the dimensions of these two submanifolds is strictly less than the dimension of the ambient manifold. It follows from the construction of $N_G$ in (\ref{Construction N}), that this four-manifold is obtained by performing a Gluck twist to a smoothly embedded 2-sphere in $M_G$.




\subsection{The $\Pin^+$-bordism class of $N_G$ and distinguishing the diffeomorphism types of $M_G\cs(n - 1)(S^2\times S^2)$ and $N_G\cs(n - 1)(S^2\times S^2)$ for $n\in \N$}\label{Section Smooth Structures}From the construction (\ref{Construction N}), it follows that there is a $\Pin^+$-bordism between $N_G$ and the disjoint union of $M(G)\sqcup A$ with their respective $\Pin^+$-structures. This observation and Theorem \ref{Theorem 1}  yield the following proposition.

\begin{proposition}\label{Proposition Bordism Classes} For every choice of $\Pin^+$-structure $(M_G, \phi_{M_G})$ on $M_G$, we have that\begin{equation}\label{Eta 1}\eta(M_G, \phi_{M_G}) = 0 \mod 2\Z\end{equation}and\begin{equation}[(M_G, \phi_{M_G})] =  0 \in \Omega^{\Pin^+}_4.\end{equation}For every choice of $\Pin^+$-structure $(N_G, \phi_{N_G})$, we have that\begin{equation}\label{Eta 2}\eta(N_G, \phi_{N_G}) = 1 \mod 2\Z\end{equation}and\begin{equation}[(N_G, \phi_{N_G})] = [(A, \pm \phi_{A})] =  8 \in \Omega^{\Pin^+}_4.\end{equation}Moreover, for every $n\in \N$ and for every choice of $\Pin^+$-structures on the connected sums\begin{center}$(M_G\cs(n - 1)(S^2\times S^2), \phi_{M_G}')$ and $(N_G\cs(n - 1)(S^2\times S^2), \phi_{N_G}')$,\end{center}we have\begin{equation}[(M_G\cs(n - 1)(S^2\times S^2), \phi_{M_G}')] = [(M_G, \phi_{M_G})] = 0\in \Omega^{\Pin^+}_4\end{equation} and\begin{equation}[(N_G\cs(n - 1)(S^2\times S^2), \phi_{N_G}')] = [(N_G, \phi_{N_G})] = 8\in \Omega^{\Pin^+}_4.\end{equation}
\end{proposition}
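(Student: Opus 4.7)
The plan is to reduce everything to two prior inputs: Theorem \ref{Theorem 1}, which gives $[(M(G), \phi_{M(G)})] = 0$ for every $\Pin^+$-structure, and Theorem \ref{Theorem Akbulut}, which records $\eta(A, \pm\phi_{A}) = 1 \bmod 2\Z$. The technical glue throughout will be additivity of the $\Pin^+$-bordism class under connected sum, together with completeness of the $\eta$-invariant as a $\Pin^+$-bordism invariant (Theorem \ref{Theorem Stolz}).

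For $M_G = M(G) \cs (S^2 \times S^2)$, I would first observe that $S^2 \times S^2$ is simply connected with $\sigma(S^2\times S^2) = 0$, whence Theorem \ref{Theorem Stolz} gives $\eta(S^2\times S^2, \phi) = 0 \bmod 2\Z$ and then $[(S^2\times S^2, \phi)] = 0 \in \Omega^{\Pin^+}_4$. Combined with Theorem \ref{Theorem 1} and additivity of $\Pin^+$-bordism class under connected sum, this forces $[(M_G, \phi_{M_G})] = 0$ for every $\Pin^+$-structure, and then $\eta(M_G, \phi_{M_G}) = 0 \bmod 2\Z$ by Theorem \ref{Theorem Stolz}. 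Stability under further connected sum with copies of $S^2 \times S^2$ is then immediate, since each additional summand contributes $0$ to the bordism class.

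For $N_G$, I would spell out explicitly the $\Pin^+$-bordism from $N_G$ to $M(G)\sqcup A$ alluded to at the start of the subsection. Take the cylinders $(M(G)\setminus \nu(\gamma))\times [0,1]$ and $(A\setminus \nu(\alpha))\times [0,1]$, identify their boundary cylinders $\partial\nu \times [0,1/2]$ via the gluing used to define $N_G$, and cap off the remaining two free tube-ends at level $\{1\}$ with copies of $D^3 \widetilde{\times} S^1$. The resulting 5-dimensional bordism has boundary $N_G \sqcup (M(G)\sqcup A)$, and since every building block is either a product or a $D^3\widetilde{\times} S^1$, the $\Pin^+$-structure extends. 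Hence $[(N_G, \phi_{N_G})] = [(M(G), \phi)] + [(A, \pm\phi_A)] = [(A, \pm\phi_A)]$. Because $\Omega^{\Pin^+}_4 \cong \Z/16$ is generated by $\RP^4$ with $\eta$-value $\pm 1/8$ and $\eta$ is a complete invariant, the congruence $\eta(A, \pm\phi_A) = 1 \bmod 2\Z$ pins down $[(A, \pm\phi_A)] = 8$; this yields both the bordism equality and the $\eta$-value for $N_G$. Stability under further connected sum with copies of $S^2\times S^2$ then follows exactly as in the first step.

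The main obstacle I expect is the bookkeeping of $\Pin^+$-structures: I must verify that every $\Pin^+$-structure on the assembled manifolds arises from compatible $\Pin^+$-structures on the constituent pieces, and conversely that every compatible combination extends, so that the additivity argument and the explicit cobordism apply uniformly across all available $\Pin^+$-structures. This is essentially a classifying-space check once the explicit cobordism is written down, and it is the only step beyond direct quotation of the preceding results.
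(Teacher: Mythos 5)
Your overall route is the same as the paper's: reduce everything to $[(M(G),\phi)]=0\in\Omega^{\Pin^+}_4$ (Theorem \ref{Theorem 1}), $\eta(A,\pm\phi_A)=1 \bmod 2\Z$ (Theorem \ref{Theorem Akbulut}), additivity of the bordism class under connected sum and of $\eta$ under disjoint union, completeness of $\eta$ (Theorem \ref{Theorem Stolz}) to convert $\eta$-values into the classes $0$ and $8$, and the vanishing $\eta((n-1)(S^2\times S^2),\phi)=\sigma=0$ for the stabilization statement. All of that matches the paper, which likewise treats the $\Pin^+$-structure bookkeeping (every structure on a connected sum or on $N_G$ restricting to compatible structures on the pieces) as a routine check.

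The one place you go beyond the paper --- making explicit the $\Pin^+$-bordism from $N_G$ to $M(G)\sqcup A$, which the paper only asserts --- is where your construction does not work as written. After identifying $\partial\nu(\gamma)\times[0,1/2]$ with $\partial\nu(\alpha)\times[0,1/2]$ via the $N_G$-gluing, the level-$1$ end of your $5$-manifold is not ``two pieces with free tube-ends'': the unglued strips $\partial\nu\times[1/2,1]$ join $(M(G)\setminus\nu(\gamma))\times\{1\}$ to $(A\setminus\nu(\alpha))\times\{1\}$ through the identification at level $1/2$, so the top boundary is already the closed manifold $N_G$ again (your $5$-manifold is just a product $N_G\times[0,1]$ up to corners). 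Moreover, ``capping a tube-end with a copy of $D^3\widetilde{\times}S^1$'' is dimensionally incoherent inside a $5$-manifold; the natural repair, gluing $(D^3\widetilde{\times}S^1)\times[1/2,1]$ along each strip, changes the boundary but creates an extra closed boundary component $(D^3\widetilde{\times}S^1)\cup_{\varphi}(D^3\widetilde{\times}S^1)$ coming from the two level-$1/2$ caps, so the boundary would be $N_G\sqcup M(G)\sqcup A$ together with that extra piece, which your bordism-class computation does not account for. The standard construction avoiding all of this is to take $M(G)\times[0,1]\,\sqcup\,A\times[0,1]$ and glue $\nu(\gamma)\times\{1\}$ to $\nu(\alpha)\times\{1\}$ by a diffeomorphism of the solid tubes $D^3\widetilde{\times}S^1$ extending the boundary identification used to define $N_G$: the level-$0$ boundary is $M(G)\sqcup A$ and the level-$1$ boundary is exactly $N_G$, and the $\Pin^+$-structures extend because they were chosen to match on $\partial\nu$. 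With that replacement (and a word on why the boundary identification extends over the solid tubes), your argument coincides with the paper's.
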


The value (\ref{Eta 1}) was computed in Section \ref{Section Standard}, while the value (\ref{Eta 2}) follows from the discussion in Section \ref{Section Exotic} since the $\eta$-invariant is additive under disjoint unions. These values determined the $\Pin^+$-bordism classes \cite[Introduction]{[KirbyTaylor]}. Theorem \ref{Theorem Stolz} states that the value of the $\eta$-invariant of $((n - 1)(S^2\times S^2), \phi_n)$ is\begin{equation}\eta((n - 1)(S^2\times S^2), \phi_n) = \sigma((n - 1)(S^2\times S^2)) = 0 \mod 2\Z\end{equation} for every $n\in \N$, where $\sigma((n - 1)(S^2\times S^2) = 0$ is the signature of the connected sum of $n - 1$ copies of $S^2\times S^2$.

\subsection{A proof of Theorem \ref{Theorem A}}\label{Section Blueprint}The four-manifold $M_G$ is constructed in (\ref{Construction M}), while the four-manifold $N_G$ is constructed in (\ref{Construction N}). It is straight-forward to see that $\chi(M_G) = 4 - 2\Def(G)$. The constructions (\ref{New Construction 1}) and (\ref{Construction N}) imply that $M_G$ is homeomorphic to $N_G$ as it was discussed in Section \ref{Section Exotic}, where we also argued the existence of a diffeomorphism between the orientation 2-covers of $M_G$ and $N_G$ and the Gluck twist construction of $N_G$. Proposition \ref{Proposition Bordism Classes} and Theorem \ref{Theorem Stolz Main} imply that there is no diffeomorphism $M_G\cs(n - 1)(S^2\times S^2)\rightarrow N_G\cs(n - 1)(S^2\times S^2)$ for any $n\in \N$. Given that $M_G$ and $N_G$ are related by a Gluck twist, there is a diffeomorphism between $M_G\cs\mathbb{CP}^2$ and $N_G\cs\mathbb{CP}^2$ \cite[Exercise 5.2.7 (b)]{[GompfStipsicz]}. This concludes the proof of Theorem \ref{Theorem A}.\hfill $\square$

\subsection{Properties of Kreck's examples}\label{Section Kreck}As the referee kindly pointed out, weaker versions of the fourth and sixth items of Theorem \ref{Theorem A} immediately follow from coupling Kreck's examples \cite[Theorem 1]{[Kreck]} with a result of Gompf \cite{[Gompf1]}, which says that any two closed non-orientable homeomorphic four-manifolds become diffeomorphic after taking connected sums with a sufficiently large number of copies of the complex projective plane. In this section, we show that Kreck's examples $M_1:= M_{\pi}\cs K3$ and $M_2:= M_{\pi}\cs 11(S^2\times S^2)$ also satisfy the properties listed in the fourth and sixth items of Theorem \ref{Theorem A}, independently of the choice of $M_\pi$. The existence of a diffeomorphism $\widehat{M}_1\rightarrow \widehat{M}_2$ between the orientation 2-covers follows from the existence of a diffeomorphism $K3\cs \overline{K3}\rightarrow 22(S^2\times S^2)$ \cite[Corollary 2]{[Gompf2]}. This establishes the property in the fourth item of our main result for these examples. An argument using Kirby calculus shows the existence of a diffeomorphism $K3\cs \mathbb{CP}^2\rightarrow 4\mathbb{CP}^2\cs 19\overline{\mathbb{CP}^2}$ \cite[Exercise 8.3.4(d)]{[GompfStipsicz]} (see also \cite{[Gompf2], [Mandlebaum], [Moishezon]}). Thus, the four-manifolds $M_{\pi}\cs K3\cs \mathbb{CP}^2$ and $M_{\pi}\cs 11(S^2\times S^2)\cs \mathbb{CP}^2$ are diffeomorphic to $M_{\pi}\cs 23 \mathbb{CP}^2$.\hfill $\square$



\end{document}